\newtheorem{theorem}{Theorem}
\newtheorem{algorithm}[theorem]{Algorithm}
\newtheorem{properties}[theorem]{Properties}
\newtheorem{definition}[theorem]{Definition}
\newtheorem{proposition}[theorem]{Proposition}
\newtheorem{remark}[theorem]{Remark}
\newcommand{\bi}{\begin{itemize}}
\newcommand{\ei}{\end{itemize}}
\newcommand{\bd}{\begin{displaymath}}
\newcommand{\ed}{\end{displaymath}}
\newcommand{\be}{\begin{eqnarray*}}
\newcommand{\ee}{\end{eqnarray*}}
\title{\LARGE \bf
On Computation of Koopman Operator from Sparse Data}
\author{Subhrajit Sinha and  Enoch Yeung\\
\thanks{S. Sinha is with Pacific Northwest National Laboratory and E. Yeung is with the Department of Mechanical Engineering at University of California, Santa Barbara. \tt \small {email : subhrajit.sinha@pnnl.gov}
}
}
\begin{document}
\maketitle

\begin{abstract}
In this paper we propose a novel approach to compute the Koopman operator from sparse time series data. In recent years there has been considerable interests in operator theoretic methods for data-driven analysis of dynamical systems. Existing techniques for the approximation of the Koopman operator require sufficiently large data sets, but in many applications, the data set may not be large enough to approximate the operators to acceptable limits. In this paper, using ideas from robust optimization, we propose an algorithm to compute the Koopman operator from sparse data. We enrich the sparse data set with artificial data points, generated by adding bounded artificial noise and and formulate the noisy robust learning problem as a robust optimization problem and show that the optimal solution is the Koopman operator with smallest error. We illustrate the efficiency of our proposed approach in three different dynamical systems, namely, a linear system, a nonlinear system and a dynamical system governed by a partial differential equation. 
\end{abstract}

\section{Introduction}
In recent years there has been an increasing number of fields where the underlying physical systems are represented by data. Increasing memory, processor speed, and the advent of distributed computing architectures have enabled us to handle and analyze data with more precision and to address learning problems for nonlinear complex systems at an unprecedented scale. Another advantage of data-driven analysis lies in the fact that for many naturally occurring complex systems and engineered systems with emergent phenomena, e.g. biological systems, inter-dependent critical infrastructure, social networks, financial systems, it may not always be possible to derive and analyze theoretical mathematical models of the underlying systems \cite{yeung2015global}. In such cases, one has to resort to data-driven techniques for understanding the behavior of such systems.

In the realm of dynamical systems, there is an increasing interest in transfer operator theoretic techniques for analysis and control \cite{yeung2018koopman,yeung2017learning, Dellnitz_Junge,Mezic2000,froyland_extracting,Junge_Osinga,Mezic_comparison,
Dellnitztransport,mezic2005spectral,Mehta_comparsion_cdc,Vaidya_TAC,
raghunathan2014optimal,susuki2011nonlinear,mezic_koopmanism,
mezic_koopman_stability,surana_observer}. The operator based techniques differ from the classical study of dynamical systems in the sense that instead of studying the dynamical system on the configuration manifold and the tangent and cotangent bundles associated with them, the evolution is studied on the space of functions \cite{johnson2018class} or on the space of measures. The big advantage of using the function space or the measure space instead of the configuration space is the fact that in those spaces, the evolution is linear, even if the actual system is nonlinear. Moreover, the operators, namely, Perron-Frobenius operator and the Koopman operator are positive Markov operators. These properties can be exploited to have probabilistic interpretations and can be used for various applications like the optimal placement of sensors and actuators \cite{optimal_placement_ECC, optimal_placement_JMAA} etc. However, the trade-off lies in the fact that the operators defined on the function space or the measure space are infinite dimensional. 

Another advantage of the operator theoretic framework for analysis and control of dynamical systems is that the methods developed are data-centric and in many cases wholly data-driven. In particular, a finite-dimensional approximation of both Perron-Frobenius and Koopman operators can be constructed from time-series data obtained from experiments \cite{yeung2017learning}. Towards this goal various data-driven methods are proposed for the finite dimensional approximation of these operators \cite{dellnitz2002set, Mezic2000,DMD_schmitt,rowley2009spectral,EDMD_williams}, with Dynamic Mode Decomposition (DMD) and extended DMD being the ones which are used extensively. Recent works have also addressed the problem of computing these operators for systems with process and observation noise and for Random Dynamical Systems (RDS)  \cite{mezic_stochastic_koopman_spectrum,PhysRevE.96.033310, robust_DMD_ACC,robust_DMD_arxiv}. In \cite{mezic_stochastic_koopman_spectrum} the authors have provided a characterization of the spectrum and eigenfunctions of the Koopman operator for discrete and continuous time RDS, while in \cite{PhysRevE.96.033310}, the authors have provided an algorithm to compute the Koopman operator for systems with both process and observation noise. In \cite{robust_DMD_ACC,robust_DMD_arxiv} the authors used robust optimization-based techniques to compute the approximate Koopman operator for data sets of finite length and have shown that normal DMD or EDMD and subspace DMD \cite{PhysRevE.96.033310} lead to an unsatisfactory approximation of Koopman operator for data sets of finite length.

A different challenge that researchers often have to account for is the scenario when the data set is not only finite, but also small, that is sparse. Sparse data refers to
data sets with few data points and can have an immense effect on the ability to train the
Koopman operator into producing accurate predictions. In particular, in the case of sparse data the existing DMD and EDMD algorithms may lead to an ill-conditioned least-square problem.
In this paper, we address the problem of computation of Koopman operator when the data set is sparse. We append artificial data points to the sparse data set to enrich the data and use robust optimization-based techniques to obtain the approximate Koopman operator. The robust optimization problem is a min-max problem which can be approximated as a least squares problem with a regularization term. The regularization parameter imposes sparsity in the Koopman operator. Moreover, it prevents over-fitting of the data and hence can be used to design a data-driven predictor \cite{korda_mezic_predictor}.

The organization of the paper is as follows. In section \ref{section_transfer_operators} we provide the basics of transfer operators followed by a discussion of DMD and EDMD algorithms in section \ref{section_DMD}. In section \ref{section_sparse_Koopman} we present the main results of the paper and state the algorithm to construct the Koopman operator for sparse data. Design of the robust predictor is discussed in section \ref{section_predictor}, with simulation results in section \ref{section_simulation}. Finally we conclude the paper in \ref{section_conclusion}.

\section{Transfer Operators for Dynamical Systems}\label{section_transfer_operators}
Consider a discrete-time dynamical system
\begin{eqnarray}\label{system}
z_{t+1} = T(z_t)
\end{eqnarray}
where $T:Z\subset \mathbb{R}^N \to Z$ is assumed to be an invertible smooth diffeomorphism.  Associated with the dynamical system (\ref{system}) is the Borel-$\sigma$ algebra ${\cal B}(Z)$ on $Z$ and the vector space ${\cal M}(Z)$ of bounded complex valued measures on $X$. With this, two linear operators, namely, Perron-Frobenius (P-F) and Koopman operator, can be defined as follows \cite{Lasota} :
\begin{definition}[Perron-Frobenius Operator] 
The Perrorn-Frobenius operator $\mathbb{P}:{\cal M}(Z)\to {\cal M}(Z)$ is given by
\[[\mathbb{P}\mu](A)=\int_{{\cal Z} }\delta_{T(z)}(A)d\mu(z)=\mu(T^{-1}(A))\]
$\delta_{T(z)}(A)$ is stochastic transition function which measure the probability that point $z$ will reach the set $A$ in one time step under the system mapping $T$. 
\end{definition}

\begin{definition}[Invariant measures] Invariant measures are the fixed points of
the P-F operator $\mathbb{P}$ that are also probability measures. Let $\bar \mu$ be the invariant measure then, $\bar \mu$ satisfies
\[\mathbb{P}\bar \mu=\bar \mu.\]
\end{definition}

If the state space $Z$ is compact, it is known that the P-F operator admits at least one invariant measure.

\begin{definition} [Koopman Operator] 
Given any $h\in\cal{F}$, $\mathbb{U}:{\cal F}\to {\cal F}$ is defined by
\[[\mathbb{U} h](z)=h(T(z))\]
where $\cal F$ is the space of function (observables) invariant under the action of the Koopman operator.
\end{definition}

Both the P-F operator and the Koopman operator are linear operators, even if the underlying system is non-linear. But while analysis is made tractable by linearity, the trade-off is that these operators are typically infinite dimensional. In particular, the P-F operator and Koopman operator often will lift a dynamical system from a finite-dimensional space to generate an infinite dimensional linear system in infinite dimensions. 
\begin{properties}\label{property}
Following properties for the Koopman and Perron-Frobenius operators can be stated \cite{Lasota}.

\begin{enumerate}
\item [a).] For the Hilbert space ${\cal F}=L_2(Z,{\cal B}, \bar \mu)$ 
\begin{eqnarray*}
&&\parallel \mathbb{U}h\parallel^2=\int_Z |h(T(z))|^2d\bar \mu(z)
\nonumber\\&=&\int_Z | h(z)|^2 d\bar\mu(z)=\parallel h\parallel^2
\end{eqnarray*}
where $\bar \mu$ is an invariant measure. This implies that Koopman operator is unitary.

\item [b).] For any $h\geq 0$, $[\mathbb{U}h](z)\geq 0$ and hence Koopman is a positive operator.

\item [c).]For invertible system $T$, the P-F operator for the inverse system $T^{-1}:Z\to Z$ is given by $\mathbb{P}^*$ and $\mathbb{P}^*\mathbb{P}=\mathbb{P}\mathbb{P}^*=I$. Hence, the P-F operator is unitary.

\item [d).] If the P-F operator is defined to act on the space of densities i.e., $L_1(Z)$ and Koopman operator on space of $L_\infty(Z)$ functions, then it can be shown that the P-F and Koopman operators are dual to each other \footnote{with some abuse of notation we use the same notation for the P-F operator defined on the space of measure and densities.}
\begin{eqnarray*}
&&\left<\mathbb{U} f,g\right>=\int_Z [\mathbb{U} f](z)g(z)dx\nonumber\\&=&\int_Xf(y)g(T^{-1}(y))\left|\frac{dT^{-1}}{dy}\right|dy=\left<f,\mathbb{P} g\right>
\end{eqnarray*}
where $f\in L_{\infty}(Z)$ and $g\in L_1(Z)$ and the P-F operator on the space of densities $L_1(Z)$ is defined as follows
\[[\mathbb{P}g](z)=g(T^{-1}(z))|\frac{dT^{-1}(z)}{dz}|.\]

\item [e).] For $g(z)\geq 0$, $[\mathbb{P}g](z)\geq 0$.

\item [f).] Let $(Z,{\cal B},\mu)$ be the measure space where $\mu$ is a positive but not necessarily the invariant measure of $T:Z\to Z$, then the P-F operator $\mathbb{P}:L_1(Z,{\cal B},\mu)\to L_1(Z,{\cal B},\mu)$  satisfies  following property:

 \[\int_Z [\mathbb{P}g](z)d\mu(z)=\int_Z g(z)d\mu(x).\]\label{Markov_property}
\end{enumerate}
\end{properties}

\section{Dynamic Mode Decomposition (DMD) and Extended Dynamic Mode Decomposition (EDMD)}\label{section_DMD}
Dynamic Mode Decomposition was first proposed in \cite{DMD_schmitt} in the context of analysis of fluid flow analysis. DMD algorithm is a way of approximating the spectrum of the Koopman operator. DMD algorithm was improved upon (EDMD) in \cite{EDMD_williams}, where the authors used a choice of observables, called a \textit{dictionary}. In this improved algorithm, the Koopman operator is approximated as a linear map on the span of the finite set of dictionary functions. In this section, we briefly describe the EDMD algorithm for approximating the Koopman operator. 

Consider snapshots of data set obtained from simulating a discrete time dynamical system $z\mapsto T(z)$ or from an experiment
\begin{eqnarray}
X_p = [x_1,x_2,\ldots,x_M],& X_f = [y_1,y_2,\ldots,y_M] \label{data}
\end{eqnarray}
where $x_i\in X$ and $y_i\in X$. The two pair of data sets are assumed to be two consecutive snapshots i.e., $y_i=T(x_i)$. Let $\mathcal{D}=
\{\psi_1,\psi_2,\ldots,\psi_K\}$ be the set of dictionary functions or observables, where $\psi : X \to \mathbb{C}$. Let ${\cal G}_{\cal D}$ denote the span of ${\cal D}$ such that ${\cal G}_{\cal D}\subset {\cal G}$, where ${\cal G} = L_2(X,{\cal B},\mu)$. The choice of dictionary functions are very crucial and it should be rich enough to approximate the leading eigenfunctions of Koopman operator. Define vector valued function $\mathbf{\Psi}:X\to \mathbb{C}^{K}$
\begin{equation}
\mathbf{\Psi}(\boldsymbol{x}):=\begin{bmatrix}\psi_1(x) & \psi_2(x) & \cdots & \psi_K(x)\end{bmatrix}
\end{equation}
In this application, $\mathbf{\Psi}$ is the mapping from physical space to feature space. Any function $\phi,\hat{\phi}\in \mathcal{G}_{\cal D}$ can be written as
\begin{eqnarray}
\phi = \sum_{k=1}^K a_k\psi_k=\boldsymbol{\Psi^T a},\quad \hat{\phi} = \sum_{k=1}^K \hat{a}_k\psi_k=\boldsymbol{\Psi^T \hat{a}}
\end{eqnarray}
for some set of coefficients $\boldsymbol{a},\boldsymbol{\hat{a}}\in \mathbb{C}^K$. Let \[ \hat{\phi}(x)=[\mathbb{U}\phi](x)+r,\]
where $r$ is a residual function that appears because $\mathcal{G}_{\cal D}$ is not necessarily invariant to the action of the Koopman operator. To find the optimal mapping which can minimize this residual, let $\bf K$ be the finite dimensional approximation of the Koopman operator. Then the  matrix $\bf K$ is obtained as a solution of least square problem as follows 
\begin{equation}\label{edmd_op}
\min\limits_{\bf K}\parallel {\bf G}{\bf K}-{\bf A}\parallel_F
\end{equation}
\begin{eqnarray}\label{edmd1}
\begin{aligned}
& {\bf G}=\frac{1}{M}\sum_{m=1}^M \boldsymbol{\Psi}({x}_m)^\top \boldsymbol{\Psi}({x}_m)\\
& {\bf A}=\frac{1}{M}\sum_{m=1}^M \boldsymbol{\Psi}({x}_m)^\top \boldsymbol{\Psi}({y}_m),
\end{aligned}
\end{eqnarray}
with ${\bf K},{\bf G},{\bf A}\in\mathbb{C}^{K\times K}$. The optimization problem (\ref{edmd_op}) can be solved explicitly to obtain following solution for the matrix $\bf K$
\begin{eqnarray}
{\bf K}_{EDMD}={\bf G}^\dagger {\bf A}\label{EDMD_formula}
\end{eqnarray}
where ${\bf G}^{\dagger}$ is the psedoinverse of matrix $\bf G$.
DMD is a special case of EDMD algorithm with ${\bf \Psi}(x) = x$.

\section{Koopman Operator Construction for Sparse Data}\label{section_sparse_Koopman}
In many experiments, it often is the case that the data obtained is not rich enough to achieve a good enough approximation of Koopman operator. In this case, existing algorithms like DMD or EDMD fail to generate acceptable Koopman operators. In fact, in many instances, these algorithms lead to unstable eigenvalues, even though the underlying system is stable \cite{robust_DMD_ACC,robust_DMD_arxiv}. In this section, a robust optimization-based framework is presented for a better approximation of the Koopman eigenspectrum for sparse data.

\subsection{Enrichment of the existing dataset}
Let $\bar{X}_p = [{x}_1,{x}_2,\cdots , {x}_M]$ and $\bar{X}_f = [{y}_1,{y}_2,\cdots , {y}_M]$ be the snap-shots of data obtained from a discrete-time dynamical system $z\to T(z)$, where $z\in Z \subset \mathbb{R}^N$, $x_i\in Z$ and $y_i\in Z$. The two pair of data sets are assumed to be two consecutive snapshots i.e., $y_i=T(x_i)$. Consider ${x}_i+\delta x_i$, where $\parallel \delta x_i\parallel \leq \lambda_X$. Since, $T$ is a diffeoemorphism, 
\begin{eqnarray}
T(x_i+\delta x_i)\approx T(x_i)+\frac{\partial T}{\partial x}\delta x_i = y_i + \delta y_i.
\end{eqnarray}
Since $\parallel \delta x_i\parallel \leq \lambda_X$, $x_i+\delta x_i\in B(x_i,\lambda_X)$, where 
\begin{eqnarray*}
B(x_0,r) = \{x\in\mathbb{R}^N| \parallel x - x_0 \parallel \leq r\}.
\end{eqnarray*}
Again $T$ being a smooth diffeomorphism implies that $\frac{\partial T}{\partial x} \leq \lambda_T$ and hence $T(x_i+\delta x_i)\in B(y_i,\lambda_Y)$, where $\lambda_Y\leq \lambda_X\lambda_T$.

Hence, a \textit{sufficiently small perturbation} to any point $x_i$ will result in a small enough perturbation of $y_i$ and it is the boundedness of the small perturbations that is used to enrich the existing data set. In particular, to each observed data tuple $(x_i,y_i)$, we augment an extra data point $(x_i+\delta x_i, y_i+\delta y_i)$. Note that, for each data point more than one data point can be augmented, but for clarity, we will discuss the situation where only one extra artificial data point is augmented to each observed data point. Hence, an artificial data set 
\begin{eqnarray}\label{enriched_dataset}
\begin{aligned}
{X}_p &= [{x}_1,\cdots , {x}_M,x_1+\delta x_1, \cdots , x_M + \delta x_M]\\
& = [{x}_1,\cdots ,{x}_{2M}]\\
{X}_f &= [{y}_1,\cdots , {y}_M,y_1+\delta y_1, \cdots , y_M + \delta y_M]\\
& = [{y}_1,\cdots ,{y}_{2M}]
\end{aligned}
\end{eqnarray}
is created with $2(M+1)$ data points. Here $x_{M+i} = x_i + \delta x_i$ and $y_{M+i} = y_i + \delta y_i$.

\subsection{Robust Optimization Formulation}

Let $X_p=[{x}_1,\cdots ,{x}_{2M}]$ and $X_f=[y_1,\cdots y_{2M}]$ be the enriched data set, as given in (\ref{enriched_dataset}). However, all the data points $(x_i,y_i)$ are not obtained from the system but has some error due to the artificial data points. The errors in the artificial data points can be thought of as measurement noise and the uncertainty acts as an adversary which tries to maximize the residual. Hence to obtain the Koopman operator $\bf K$ for uncertain data, a robust optimization problem can be formulated as the following $\min-\max$ optimization problem. 

\begin{equation}\label{edmd_robust}
\min\limits_{\bf K}\max_{\delta\in \Delta}\parallel {\bf G}_\delta{\bf K}-{\bf A}_\delta\parallel_F=:\min\limits_{\bf K}\max_{\delta\in \Delta} {\cal J}({\bf K}, {\bf G}_\delta,{\bf A}_\delta)
\end{equation}
where
\begin{eqnarray}
&&{\bf G}_\delta=\frac{1}{2M}\sum_{i=1}^{2M} \boldsymbol{\Psi}({ x}_i)^\top \boldsymbol{\Psi}({x}_i)\nonumber\\
&&{\bf A}_\delta=\frac{1}{2M}\sum_{i=1}^{2M} \boldsymbol{\Psi}({ x}_i)^\top \boldsymbol{\Psi}({ y}_{i}),
\end{eqnarray}
with ${\bf K},{\bf G}_\delta,{\bf A}_\delta\in\mathbb{C}^{K\times K}$.

The robust optimization problem (\ref{edmd_robust}), is in general non-convex because the cost $\cal J$ may not be a convex function of $\delta$. 

\begin{proposition}
The optimization problem (\ref{edmd_robust}) can be approximated as
\begin{equation}\label{edmd_robust_convex}
\min\limits_{\bf K}\max_{\delta{\bf G},\delta{\bf A}\in  {\cal U}}\parallel ({\bf G}+\delta {\bf G}){\bf K}-({\bf A}+\delta {\bf A})\parallel_F
\end{equation}
where $\cal U$ is a compact set in $\mathbb{R}^{K\times K}$.
\end{proposition}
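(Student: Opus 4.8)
The plan is to realize the perturbed data matrices ${\bf G}_\delta$ and ${\bf A}_\delta$ as additive perturbations of the nominal EDMD matrices ${\bf G}$ and ${\bf A}$ of (\ref{edmd1}), and then to relax the structured set of admissible data perturbations to a compact matrix set $\cal U$. First I would split each sum over the $2M$ enriched samples into the $M$ original samples and the $M$ artificial ones. Since $x_{M+i}=x_i+\delta x_i$ and $y_{M+i}=y_i+\delta y_i$, the first half reproduces exactly $\frac{1}{2}{\bf G}$ and $\frac{1}{2}{\bf A}$, so that
\[
{\bf G}_\delta = \frac{1}{2}{\bf G} + \frac{1}{2M}\sum_{i=1}^M \boldsymbol{\Psi}(x_i+\delta x_i)^\top \boldsymbol{\Psi}(x_i+\delta x_i),
\]
and analogously for ${\bf A}_\delta$ with $\boldsymbol{\Psi}(y_i+\delta y_i)$ in the second factor.

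Next I would use smoothness to linearize the artificial half. Because $T$ is a smooth diffeomorphism and the dictionary functions are smooth, $\boldsymbol{\Psi}$ is continuously differentiable, so
\[
\boldsymbol{\Psi}(x_i+\delta x_i) = \boldsymbol{\Psi}(x_i) + D\boldsymbol{\Psi}(x_i)\,\delta x_i + o(\parallel \delta x_i\parallel).
\]
Substituting this expansion (and its $y$ analogue) back into the artificial half, the leading term rebuilds the missing $\frac{1}{2}{\bf G}$ and $\frac{1}{2}{\bf A}$, while the remaining terms are linear and higher order in the $\delta x_i,\delta y_i$. Collecting them, I define $\delta{\bf G}:={\bf G}_\delta-{\bf G}$ and $\delta{\bf A}:={\bf A}_\delta-{\bf A}$, which by construction vanish when all perturbations are zero, so that ${\bf G}_\delta={\bf G}+\delta{\bf G}$ and ${\bf A}_\delta={\bf A}+\delta{\bf A}$.

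Then I would bound these perturbation matrices. Since $\parallel \delta x_i\parallel\le \lambda_X$ and $\parallel \delta y_i\parallel\le \lambda_Y\le \lambda_X\lambda_T$, and since the Jacobian $D\boldsymbol{\Psi}$ is bounded on the compact data domain, the Frobenius norms $\parallel \delta{\bf G}\parallel_F$ and $\parallel \delta{\bf A}\parallel_F$ admit explicit bounds proportional to $\lambda_X$. Hence, as $\delta$ ranges over the admissible set $\Delta$, the induced pair $(\delta{\bf G},\delta{\bf A})$ ranges over a bounded subset of $\mathbb{R}^{K\times K}\times\mathbb{R}^{K\times K}$. Taking $\cal U$ to be a compact superset of this induced set, for instance the product of Frobenius balls of the radii just derived, and replacing the maximization over $\delta\in\Delta$ in (\ref{edmd_robust}) by the maximization over $(\delta{\bf G},\delta{\bf A})\in\cal U$ yields precisely the surrogate (\ref{edmd_robust_convex}); moreover ${\cal J}$ is now convex in the pair $(\delta{\bf G},\delta{\bf A})$ over the compact set $\cal U$, which is what makes the reformulation tractable.

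The main obstacle I expect is the relaxation step itself. The true image $\{(\delta{\bf G},\delta{\bf A}):\delta\in\Delta\}$ is a structured and in general non-convex set, so enclosing it in the convex compact set $\cal U$ strictly enlarges the adversary's feasible region; this is exactly why (\ref{edmd_robust_convex}) is an \emph{approximation} rather than an equivalence. Care is needed to argue that the linearization error, the $o(\parallel\delta x_i\parallel)$ terms, is negligible in the regime of \emph{sufficiently small perturbations} assumed in the enrichment, and that $\cal U$ can be chosen tight enough that the worst-case residual is not grossly overestimated while still retaining compactness and convexity of the inner problem.
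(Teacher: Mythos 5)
Your proposal is correct and follows essentially the same route as the paper's proof: Taylor-expand $\boldsymbol{\Psi}$ about the nominal data points, write ${\bf G}_\delta={\bf G}+\delta{\bf G}$ and ${\bf A}_\delta={\bf A}+\delta{\bf A}$, and use $\parallel\delta x_i\parallel\leq\lambda_X$ together with boundedness of the dictionary and its Jacobian to confine $(\delta{\bf G},\delta{\bf A})$ to a compact set ${\cal U}$. Your explicit splitting of the $2M$-term sums into original and artificial halves, and your remark that enclosing the structured perturbation image in ${\cal U}$ is a relaxation (hence an approximation, not an equivalence), are slightly more careful than the paper's but do not change the argument.
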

\begin{proof}
From Taylor series expansion we have, ${\bf \Psi}(x_i+\delta x_i) = {\bf \Psi}(x_i) +  {\bf \Psi}'(x_i) \delta x_i+h.o.t.$, where ${\bf \Psi}'(x_i)$ is the first derivative of ${\bf \Psi}(x)$ at $x_i$. Hence,
\begin{eqnarray*}
{\bf G}_{\delta} &\approx& {\bf G} + \frac{1}{2M}\sum_{i=1}^{2M}{\bf \Psi}^\top (x_i)\delta x_i {\bf \Psi}'(x_i)\\
&=& {\bf G} +\delta {\bf G}
\end{eqnarray*}
where $\delta {\bf G} = \frac{1}{2M}\sum_{i=1}^{2M}{\bf \Psi}^\top (x_i)\delta x_i {\bf \Psi}'(x_i)$.

Moreover,
\begin{eqnarray*}
&&\parallel \delta {\bf G} \parallel_F = \parallel \frac{1}{2M}\sum_{i=1}^{2M}{\bf \Psi}^\top (x_i)\delta x_i {\bf \Psi}'(x_i) \parallel_F\\
&\leq& \frac{1}{2M}\sum_{i = 1}^{2M}\parallel {\bf \Psi}^\top (x_i)\delta x_i {\bf \Psi}'(x_i)\parallel_F \\
&\leq&\frac{1}{2M}\sum_{i=1}^{2M}\parallel {\bf \Psi}^\top (x_i)\parallel_F \cdot \parallel\delta x_i \parallel_F \cdot \parallel {\bf \Psi}'(x_i)\parallel_F
\end{eqnarray*}
Hence, $\delta {\bf G}$ belongs to a compact set ${\cal U}_1$. Similarly, one can show ${\bf A}_\delta \approx {\bf A} + \delta {\bf A}$ and $\delta {\bf A}$ belongs to a compact set ${\cal U}_2$. Letting ${\cal U}={\cal U}_1\cup{\cal U}_2$, proves the proposition.
\end{proof}
The above proposition allows us to compute the Koopman operator $\bf K$ as a solution of a robust optimization problem (\ref{edmd_robust_convex}). The optimization problem (\ref{edmd_robust_convex}) has interesting connections with optimization problem involving regularization. In particular, one has the following theorem.

\begin{theorem}
The optimization problem 
\begin{equation}
\min\limits_{\bf K}\max_{\delta{\bf G},\delta{\bf A}\in  {\cal U}}\parallel ({\bf G}+\delta {\bf G}){\bf K}-({\bf A}+\delta {\bf A})\parallel_F
\end{equation}
is equivalent to the following optimization problem
\begin{eqnarray}\label{rob_eqv}
\min\limits_{\bf K}\parallel {\bf G}{\bf K}-{\bf A}\parallel_F+\lambda \parallel {\bf K}\parallel_F\label{regular}
\end{eqnarray}
\end{theorem}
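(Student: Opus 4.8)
The plan is to freeze ${\bf K}$, solve the inner maximization over the uncertainty set ${\cal U}$ in closed form, and only afterwards carry out the outer minimization. Writing the residual $R:={\bf G}{\bf K}-{\bf A}$, the argument of the Frobenius norm splits as
\[
({\bf G}+\delta{\bf G}){\bf K}-({\bf A}+\delta{\bf A}) = R + (\delta{\bf G}\,{\bf K}-\delta{\bf A}),
\]
so the whole theorem reduces to evaluating $\max_{\delta{\bf G},\delta{\bf A}\in{\cal U}}\parallel R+\delta{\bf G}\,{\bf K}-\delta{\bf A}\parallel_F$ and showing it equals $\parallel R\parallel_F+\lambda\parallel{\bf K}\parallel_F$. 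Here I would use the structure of ${\cal U}$ supplied by the preceding proposition, namely that $\delta{\bf G}$ and $\delta{\bf A}$ are constrained to Frobenius balls whose radius I will call $\lambda$.

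First I would establish the upper bound. By the triangle inequality followed by submultiplicativity of the Frobenius norm,
\[
\parallel R+\delta{\bf G}\,{\bf K}-\delta{\bf A}\parallel_F \le \parallel R\parallel_F + \parallel\delta{\bf G}\parallel_F\parallel{\bf K}\parallel_F + \parallel\delta{\bf A}\parallel_F,
\]
and taking the supremum over ${\cal U}$ gives $\parallel R\parallel_F+\lambda\parallel{\bf K}\parallel_F$ once the constant, ${\bf K}$-independent $\delta{\bf A}$ contribution is folded into the definition of $\lambda$. This direction is routine; the content of the theorem is that the bound is actually attained.

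Second, and this is the crux, I would exhibit worst-case perturbations that saturate the two inequalities simultaneously. Let $U:=R/\parallel R\parallel_F$ be the normalized residual (treating $R=0$ as a separate, trivial case where any unit-norm direction works). The idea is to make the perturbation term add \emph{in phase} with $R$: choose $\delta{\bf A}$ proportional to $-U$ so that $-\delta{\bf A}$ points along $U$ with its norm saturated, and choose $\delta{\bf G}$ as a rank-one matrix aligned so that $\delta{\bf G}\,{\bf K}$ is a nonnegative multiple of $U$ with $\parallel\delta{\bf G}\parallel_F$ saturated. For such a choice the triangle inequality holds with equality and the objective reaches the upper bound, proving the inner maximum equals $\parallel R\parallel_F+\lambda\parallel{\bf K}\parallel_F$ for every ${\bf K}$. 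Substituting this back turns the robust $\min$-$\max$ into $\min_{\bf K}\parallel{\bf G}{\bf K}-{\bf A}\parallel_F+\lambda\parallel{\bf K}\parallel_F$, which is exactly (\ref{regular}).

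I expect the attainability step to be the main obstacle. In the scalar or vector regression setting the worst-case alignment is immediate, but here ${\bf K}$ is a matrix, and the quantity $\parallel\delta{\bf G}\,{\bf K}\parallel_F$ is controlled tightly by $\parallel\delta{\bf G}\parallel_F\parallel{\bf K}\parallel_2$ rather than by $\parallel\delta{\bf G}\parallel_F\parallel{\bf K}\parallel_F$; recovering the Frobenius regularizer $\parallel{\bf K}\parallel_F$ exactly therefore requires either a column-structured uncertainty set, so that each column of ${\bf K}$ is perturbed independently as in robust-Lasso arguments, or the understanding that (\ref{regular}) is the tight bound only up to the standard equivalence of matrix norms. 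Getting the single-parameter form to come out cleanly — coupling the radii of the $\delta{\bf G}$ and $\delta{\bf A}$ balls so that the constant $\parallel\delta{\bf A}\parallel_F$ term does not spoil the identification — is the bookkeeping I would have to be most careful about.
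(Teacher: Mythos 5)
Your proposal follows essentially the same route as the paper's proof: fix $\bf K$, bound the worst-case residual from above via the triangle inequality and submultiplicativity, and attain the bound from below with perturbations aligned to the normalized residual, applied column by column exactly in the ``robust-Lasso'' fashion you anticipate. The bookkeeping obstacle you flag --- that the column-wise construction naturally produces $\sum_i \parallel {\bf K}_i\parallel_2$ and $\sum_i r_i$ rather than $\parallel {\bf K}\parallel_F$ and the Frobenius residual --- is real, but it is present and left unresolved in the paper's own proof as well, so you have not missed any idea that the paper actually supplies.
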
 
\begin{proof}
The min-max optimization problem is 
\begin{eqnarray}
{\cal J} = \min_{\bf K}\max_{\delta{\bf G},\delta{\bf A}\in  {\cal U}}\parallel ({\bf G}+\delta {\bf G}){\bf K}-({\bf A}+\delta {\bf A})\parallel_F
\end{eqnarray}
Fix ${\bf K}\in\mathbb{R}^{K\times K}$ and let
\begin{eqnarray*}
r = \max_{\delta{\bf G},\delta{\bf A}\in  {\cal U}}\parallel ({\bf G}+\delta {\bf G}){\bf K}-({\bf A}+\delta {\bf A})\parallel_F
\end{eqnarray*}
be the worst-case residual.
Let $\max\{\parallel u \parallel_F | u \in {\cal U}\}\leq \lambda$. Then,
\begin{eqnarray}\nonumber\label{opt_reg_leq}
&&r = \max_{\delta{\bf G},\delta{\bf A}\in  {\cal U}} \parallel ({\bf G}+\delta {\bf G}){\bf K}-({\bf A}+\delta {\bf A})\parallel_F\\ \nonumber
&& \leq \max_{\delta{\bf G},\delta{\bf A}\in  {\cal U}} \parallel {\bf G} {\bf K} - {\bf A} \parallel_F + \parallel \delta {\bf G}{\bf K} - \delta {\bf A} \parallel_F\\
&& \leq \parallel {\bf G} {\bf K} - {\bf A} \parallel_F + \lambda \parallel {\bf K}\parallel_F + \lambda \parallel{\bf I}\parallel_F \\ 
&&\leq \parallel {\bf G} {\bf K} - {\bf A} \parallel_F + \lambda \parallel {\bf K}\parallel_F + \lambda K\end{eqnarray}
Again, let $\begin{pmatrix}
\delta {\bf G}^\star &, & \delta {\bf A}_i^\star
\end{pmatrix} = \frac{\lambda u}{\sqrt \parallel {\bf K}_i\parallel_2 + 1}\begin{pmatrix}
{\bf K}_i^\top &,& & -1
\end{pmatrix}$
where ${\bf A}_i$ and ${\bf K}_i$ are the $i^{th}$ columns of $\bf A$ and $\bf K$ respectively and 
\begin{eqnarray*}
u = \begin{cases}
     \frac{{\bf G}{\bf K}_i-{\bf A}_i}{\parallel {\bf G}{\bf K}_i-{\bf A}_i \parallel_2},\textnormal{ if } {\bf G}{\bf K}_i\neq{\bf A}_i\\
     \textnormal{any unit norm vector otherwise.}
    \end{cases}
\end{eqnarray*}
Let 
\begin{eqnarray*}
r_i = \max_{\delta{\bf G},\delta{\bf A}\in  {\cal U}}\parallel ({\bf G}+\delta {\bf G}){\bf K}_i-({\bf A}_i+\delta {\bf A}_i)\parallel_F.
\end{eqnarray*}
Hence,
\begin{eqnarray}\nonumber
&&r_i\geq \parallel ({\bf G}{\bf K}_i-{\bf A}_i)+(\delta {\bf G}^\star {\bf K}_i -\delta {\bf A}_i^\star)\parallel_F\\ \nonumber
&& = \parallel ({\bf G}{\bf K}_i-{\bf A}_i) + \frac{\lambda({\bf G}{\bf K}_i-{\bf A}_i)}{\parallel {\bf G}{\bf K}_i-{\bf A}_i \parallel_2}({\bf K}_i^\top {\bf K}_i+1)\parallel_F\\
&&= \parallel {\bf G}{\bf K}_i-{\bf A}_i\parallel_F + \lambda \parallel {\bf K}_i\parallel_F + \lambda
\end{eqnarray}
Hence, 
\begin{eqnarray}\label{opt_reg_geq}
r = \sum_{i=1}^K r_i \geq \parallel {\bf G} {\bf K} - {\bf A} \parallel_F + \lambda \parallel {\bf K}\parallel_F + \lambda K.
\end{eqnarray}

Finally, from (\ref{opt_reg_leq}) and (\ref{opt_reg_geq}) and minimizing over $\bf K$ we get
\begin{eqnarray*}
\min\limits_{\bf K}\max_{\delta{\bf G},\delta{\bf A}\in  {\cal U}}\parallel ({\bf G}+\delta {\bf G}){\bf K}-({\bf A}+\delta {\bf A})\parallel_F
\end{eqnarray*}
is equivalent to 
\begin{eqnarray*}
\min\limits_{\bf K}\parallel {\bf G} {\bf K} - {\bf A} \parallel_F + \lambda \parallel {\bf K}\parallel_F + \lambda K.
\end{eqnarray*}
Moreover, since $\lambda$ and $K$ are positive constants,
\begin{eqnarray*}
\min\limits_{\bf K}\max_{\delta{\bf G},\delta{\bf A}\in  {\cal U}}\parallel ({\bf G}+\delta {\bf G}){\bf K}-({\bf A}+\delta {\bf A})\parallel_F
\end{eqnarray*}
is equivalent to 
\begin{eqnarray*}
\min\limits_{\bf K}\parallel {\bf G} {\bf K} - {\bf A} \parallel_F + \lambda \parallel {\bf K}\parallel_F.
\end{eqnarray*}
\end{proof}

The above theorem allows computation of the approximate Koopman operator as a solution of an optimization problem with a regularization term. In particular, the approximate Koopman operator can be obtained as a solution of the following optimization problem
\begin{eqnarray}\label{robust_opt}
\parallel {\bf G} {\bf K} - {\bf A} \parallel_F + \lambda \parallel {\bf K}\parallel_F.
\end{eqnarray}
where
\begin{eqnarray}
\begin{aligned}
&{\bf G}=\frac{1}{2M}\sum_{i=1}^{2M} \boldsymbol{\Psi}({ x}_i)^\top \boldsymbol{\Psi}({x}_i)\\
&{\bf A}=\frac{1}{2M}\sum_{i=1}^{2M} \boldsymbol{\Psi}({ x}_i)^\top \boldsymbol{\Psi}({ y}_{i}).
\end{aligned}
\end{eqnarray}
\begin{algorithm}
\caption{Computation of Koopman Operator from Limited Data}
\begin{enumerate}
\item{To the existing data set $D = [x_1,\cdots, x_M]$ add new data points $\tilde{x}_i = x_i+\delta x_i$, where $\parallel \delta x_i\parallel \leq c$.}
\item{Form the enriched data set $\bar{D} = [x_1,\cdots, x_M, \tilde{x}_1,\cdots, \tilde{x}_M]$.}
\item{Form the sets $X_p = [x_1,\cdots, x_{M-1}, \tilde{x}_1,\cdots, \tilde{x}_{M-1}]$ and $X_f = [x_2,\cdots, x_M, \tilde{x}_2,\cdots, \tilde{x}_M]$.}
\item{Fix the dictionary functions ${\bf \Psi} = [\psi_1,\cdots , \psi_K]$.}
\item{Solve the optimization problem to obtain the approximate Koopman operator $\bf K$
\begin{eqnarray*}
\parallel {\bf G} {\bf K} - {\bf A} \parallel_F + \lambda \parallel {\bf K}\parallel_F.
\end{eqnarray*}
where
\begin{eqnarray*}
&&{\bf G}=\frac{1}{2M}\sum_{i=1}^{2M} \boldsymbol{\Psi}({ x}_i)^\top \boldsymbol{\Psi}({x}_i)\nonumber\\
&&{\bf A}=\frac{1}{2M}\sum_{i=1}^{2M} \boldsymbol{\Psi}({ x}_i)^\top \boldsymbol{\Psi}({ y}_{i}).
\end{eqnarray*}
}
\end{enumerate}
\label{algo}
\end{algorithm}

\begin{remark}
Algorithm (\ref{algo}) describes the procedure of adding $M$ extra data points, but it should be noted that one can add more or less number of artificial data points.
\end{remark}

\section{Design of Robust Predictor}\label{section_predictor}
The Koopman operator generates a linear system in a higher dimensional space, even if the underlying system is linear. The linearity of the operator enables the design of linear predictors for nonlinear systems. The following is presented briefly for the self-containment of the paper and for details the readers are referred to \cite{korda_mezic_predictor}. Let $\{x_0,\ldots,x_M\}$ be the training data-set and $\bf K$  be the finite-dimensional approximation of the transfer Koopman operator obtained using algorithm ${\bf 1}$. Let $\bar x_0$ be the initial condition from which the future is to be predicted. The initial condition from state space is mapped to the feature space using the same choice of basis function used in the robust approximation of Koopman operator i.e., \[\bar x_0\implies {\bf \Psi}(\bar x_0)^\top=: {\bf z}\in \mathbb{R}^K.\] This initial condition is propagated using Koopman operator as \[{\bf z}_n={\bf K}^n{\bf z}.\]
The predicted trajectory in the state space is then obtained as 
\[\bar x_n=C {\bf z}_n\]
where matrix $C$ is obtained as the solution of the following least squares problem
\begin{eqnarray}\label{C_pred}
\min_C\sum_{i = 1}^M \parallel x_i - C \boldsymbol \Psi (x_i)\parallel_2^2
\end{eqnarray}

\section{Simulations}\label{section_simulation}

In this section, we demonstrate the efficiency of the proposed algorithm on three different dynamical systems. In particular, we construct the Koopman operator for a linear system, a non-linear system and a system governed by a Partial Differential Equation (PDE). 
\subsection{Network of Coupled Oscillators}
Consider a network of coupled linear oscillators given by
\begin{eqnarray}\label{coup_osc}
\ddot{\theta}_k &=& -\mathcal{L}_k\theta - d\dot{\theta}_k, \quad k = 1,\cdots , N
\end{eqnarray}
where $\theta_k$ is the angular position of the $k^{th}$ oscillator, $N$ is the number of oscillators, $\mathcal{L}_k$ is the $k^{th}$ row of the Laplacian $\mathcal{L}$ and $d$ is the damping coefficient. The Laplacian $\cal L$ is chosen such that the network is a ring network with 20 oscillators (Fig. \ref{fig_oscillator}).

\begin{figure}[htp!]
\centering
\includegraphics[scale=.6]{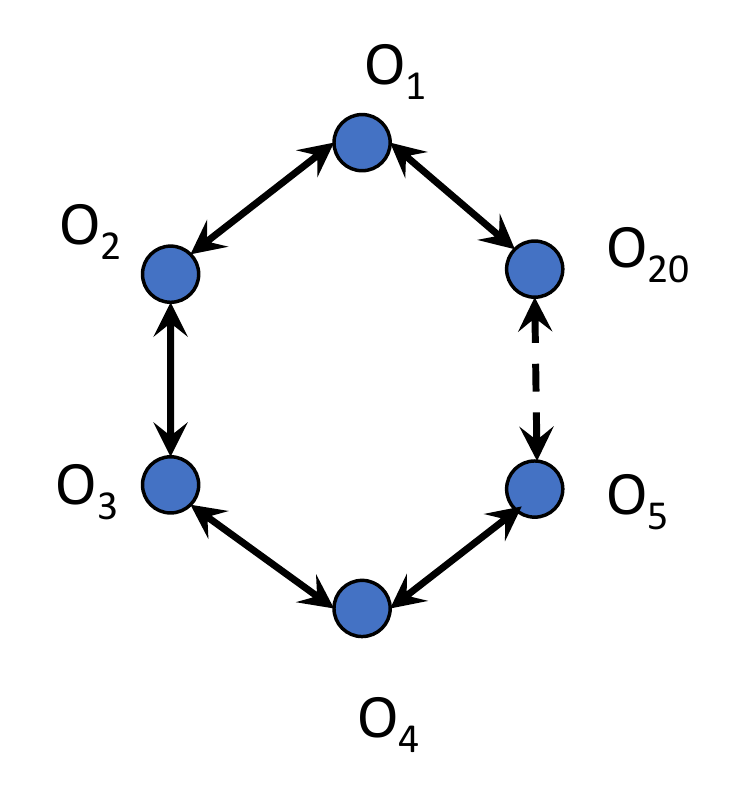}
\caption{Ring network of 20 linear oscillators.}\label{fig_oscillator}
\end{figure}

In these sets of simulations, the damping coefficient $d$ has been assumed the same for all the oscillators and is set equal to $0.4$. Data for all the states were collected for 100-time steps, with sampling time $\delta t = 0.01$ seconds and since the system is linear, linear basis functions were used for computation of the Koopman operator. 
\begin{figure}[htp!]
\centering
\subfigure[]{\includegraphics[scale=.21]{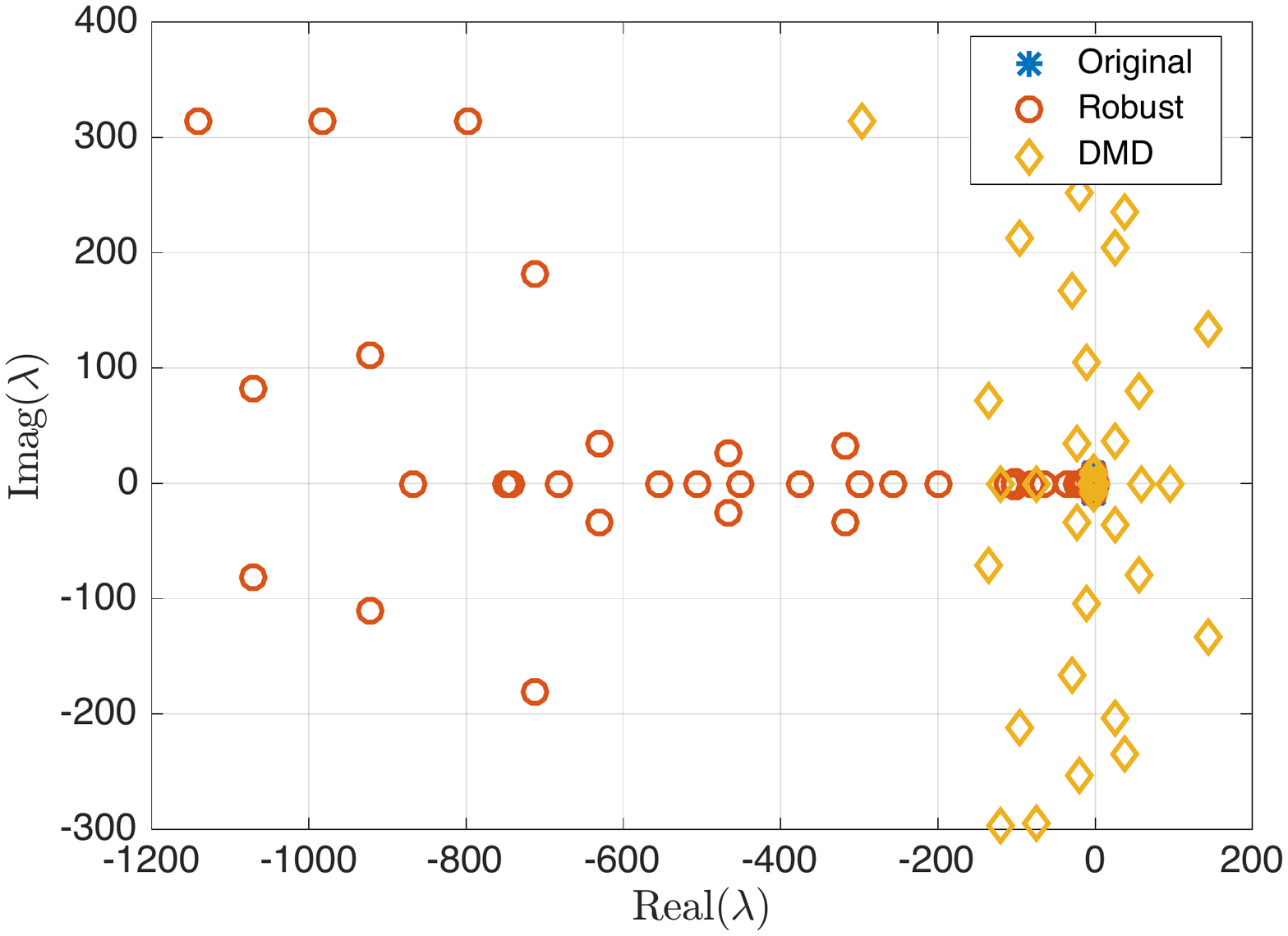}}
\subfigure[]{\includegraphics[scale=.21]{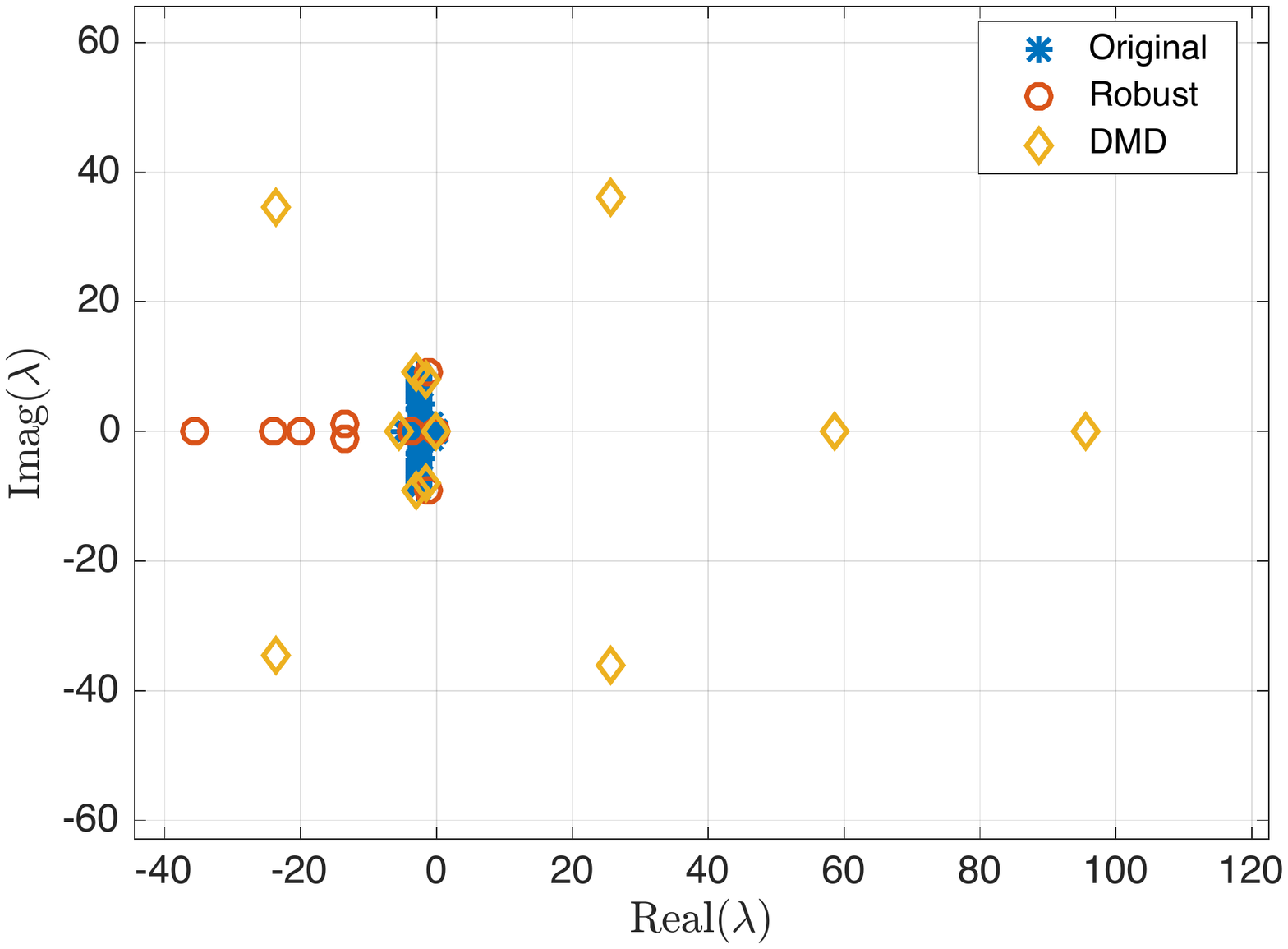}}
\caption{(a) Eigenvalue obtained using normal DMD on original training data and Robust DMD on enriched data set. (b) Dominant eigenvalues.}\label{eig_osc}
\end{figure}
The first 15-time steps data was used for training the Koopman operators. Normal DMD on the 15 data points yields positive eigenvalues with a significant real part, as shown in Fig. \ref{eig_osc}. For the Robust identification of Koopman operator, the original data set was enriched by adding 30 artificial data points and Robust DMD formulation (algorithm \ref{algo}) yields a much better approximation of the eigenvalues for the original system. The eigenvalues obtained using normal DMD and Robust DMD are shown in Fig. \ref{eig_osc}, where in Fig. \ref{eig_osc}a the complete spectrum is plotted and in Fig. \ref{eig_osc}b the dominant eigenvalues are shown. 

\begin{figure}[htp!]
\centering
\subfigure[]{\includegraphics[scale=.215]{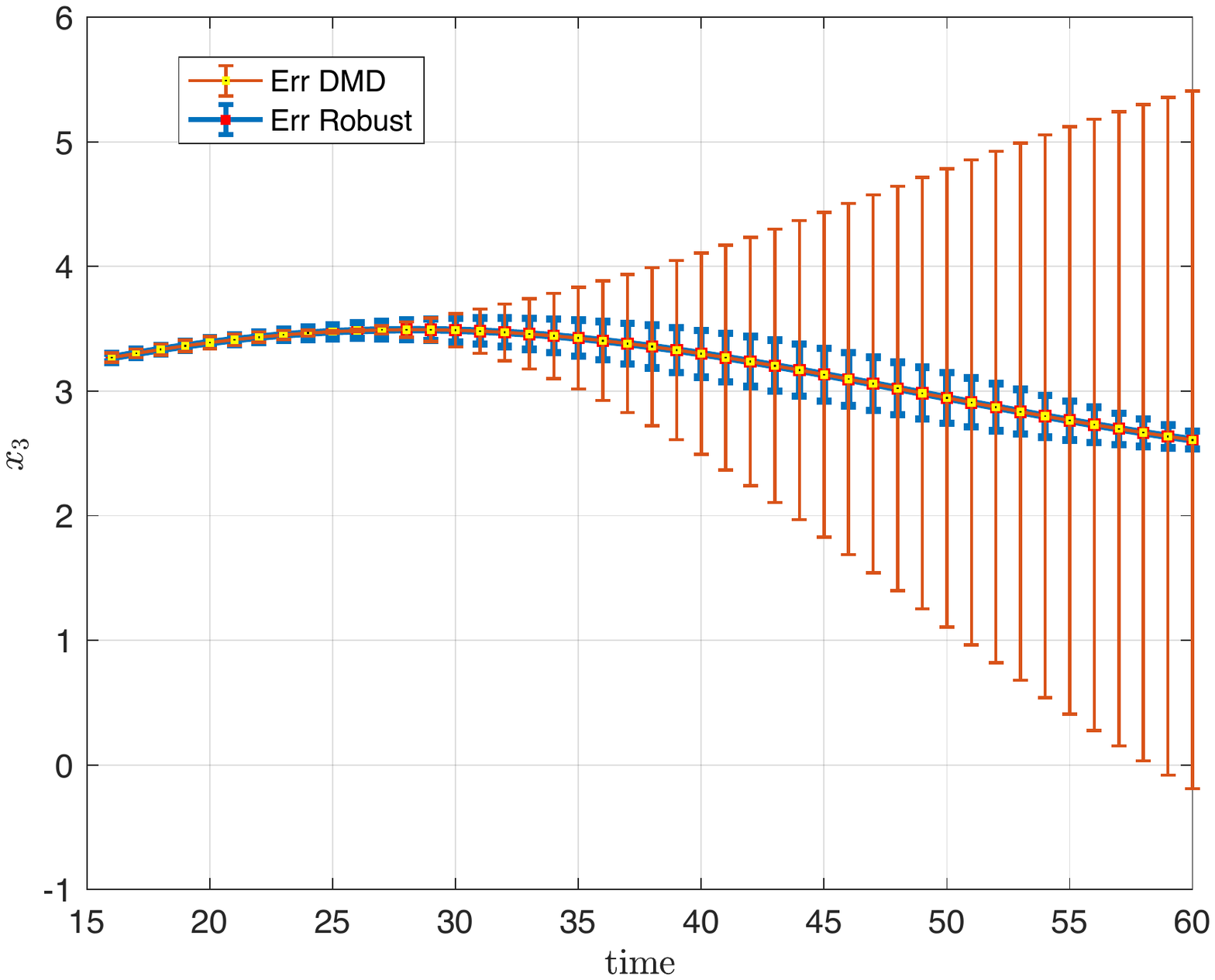}}
\subfigure[]{\includegraphics[scale=.215]{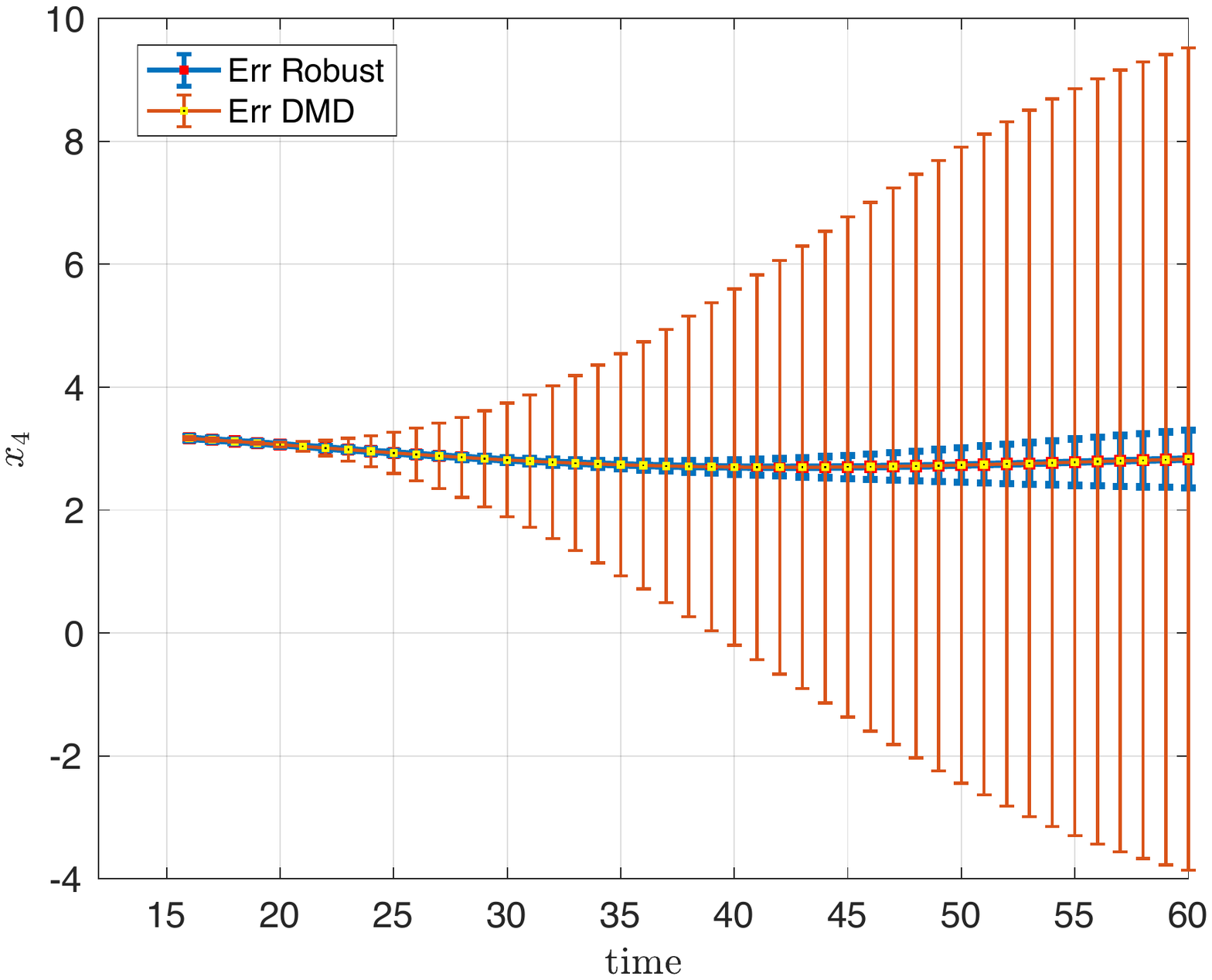}}
\caption{(a) Errors in the prediction of position of oscillator 3. (b) Errors in the prediction of position of oscillator 4.}\label{err_osc}
\end{figure}
As mentioned earlier, data were obtained for 100 times steps and the first 15 time steps were used for training the Koopman operator. Koopman operators thus obtained was used to predict the next 45 time steps and was used to compare the error. The errors in the prediction of the positions of oscillators 3 and 4, using both normal DMD and Robust DMD, are shown in Fig. \ref{err_osc}a and Fig. \ref{err_osc}b respectively. It can be observed that Robust DMD formulation generates much smaller error compared to normal DMD. In fact, this was expected, since Robust DMD with enriched data-set approximates the eigenspectrum much better compared to normal DMD.

\subsection{Stuart-Landau Equation}
The nonlinear Stuart-Landau equation on a complex function $z(t) = r(t)\exp (i\theta(t))$ is given by 
\begin{eqnarray}\label{stu_lan}
\dot{z} = (\mu + i\gamma)z - (1+ i\beta)|z|^2z ,
\end{eqnarray}
where $i$ is the imaginary unit. The solution of (\ref{stu_lan}) evolves on the limit cycle $|z| = \sqrt{\mu}$. Hence, the continuous time eigenvalues lie on the imaginary axis. The discretized version of (\ref{stu_lan}) is

{\small
\begin{eqnarray}\label{stu_lan_dis}
\begin{pmatrix}
r_{t+1}\\
\theta_{t+1}
\end{pmatrix} = \begin{pmatrix}
r_t + (\mu r_t -r_t^3)\delta t\\
\theta_t + (\gamma - \beta r_t^2)\delta t
\end{pmatrix}
\end{eqnarray}
}

The set of dictionary functions were chosen as
\begin{eqnarray}\label{stuart_dic}
{\bf \Psi}(\theta_t) = \begin{pmatrix}
e^{-10i\theta_t} & e^{-9i\theta_t} & \cdots & e^{9i\theta_t} & e^{10i\theta_t}
\end{pmatrix}
\end{eqnarray}
 and data was collected for 150 time steps, with $\delta t = 0.01$ and initial condition $(1,\pi)$. 

\begin{figure}[htp!]
\centering
\subfigure[]{\includegraphics[scale=.2]{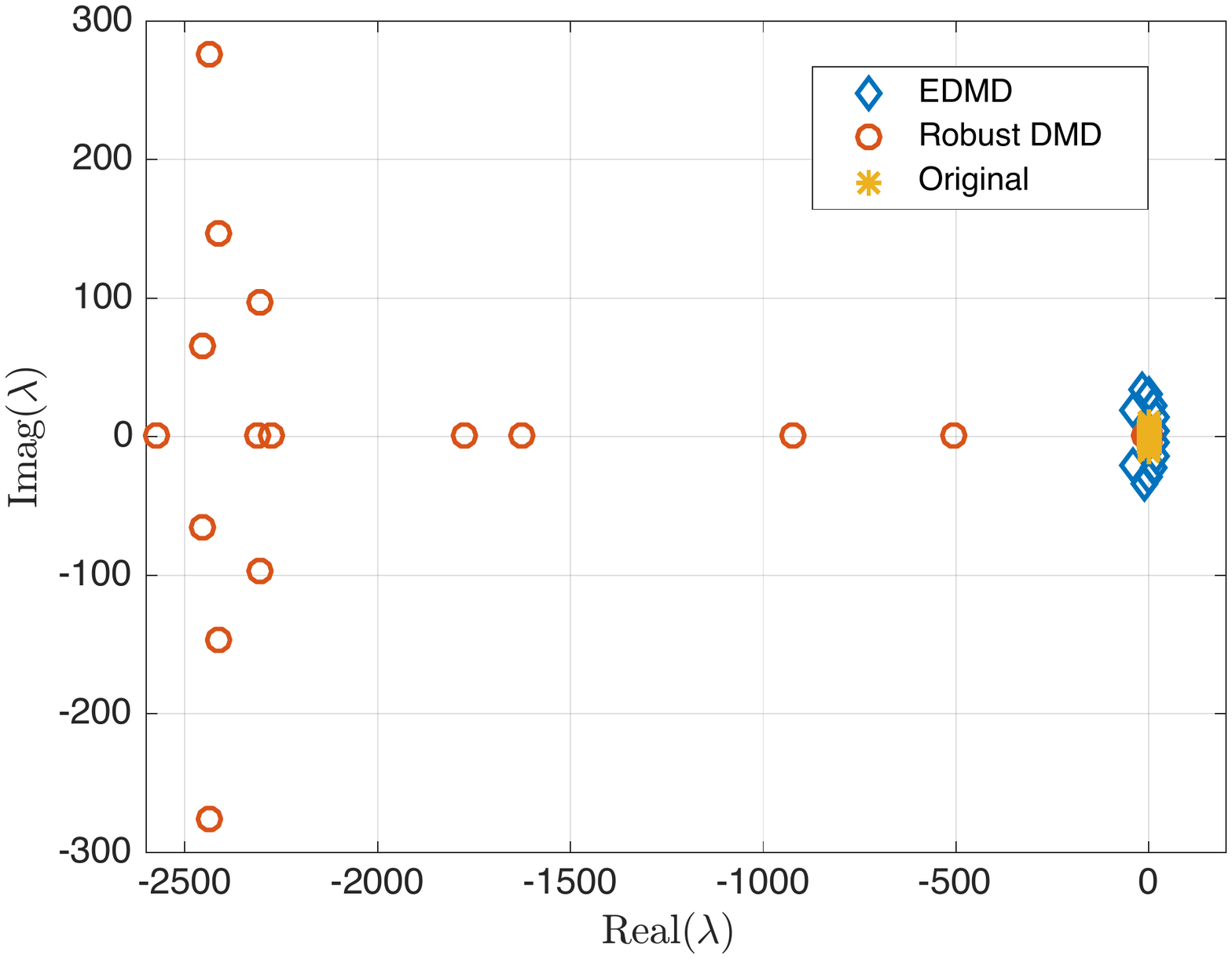}}
\subfigure[]{\includegraphics[scale=.2]{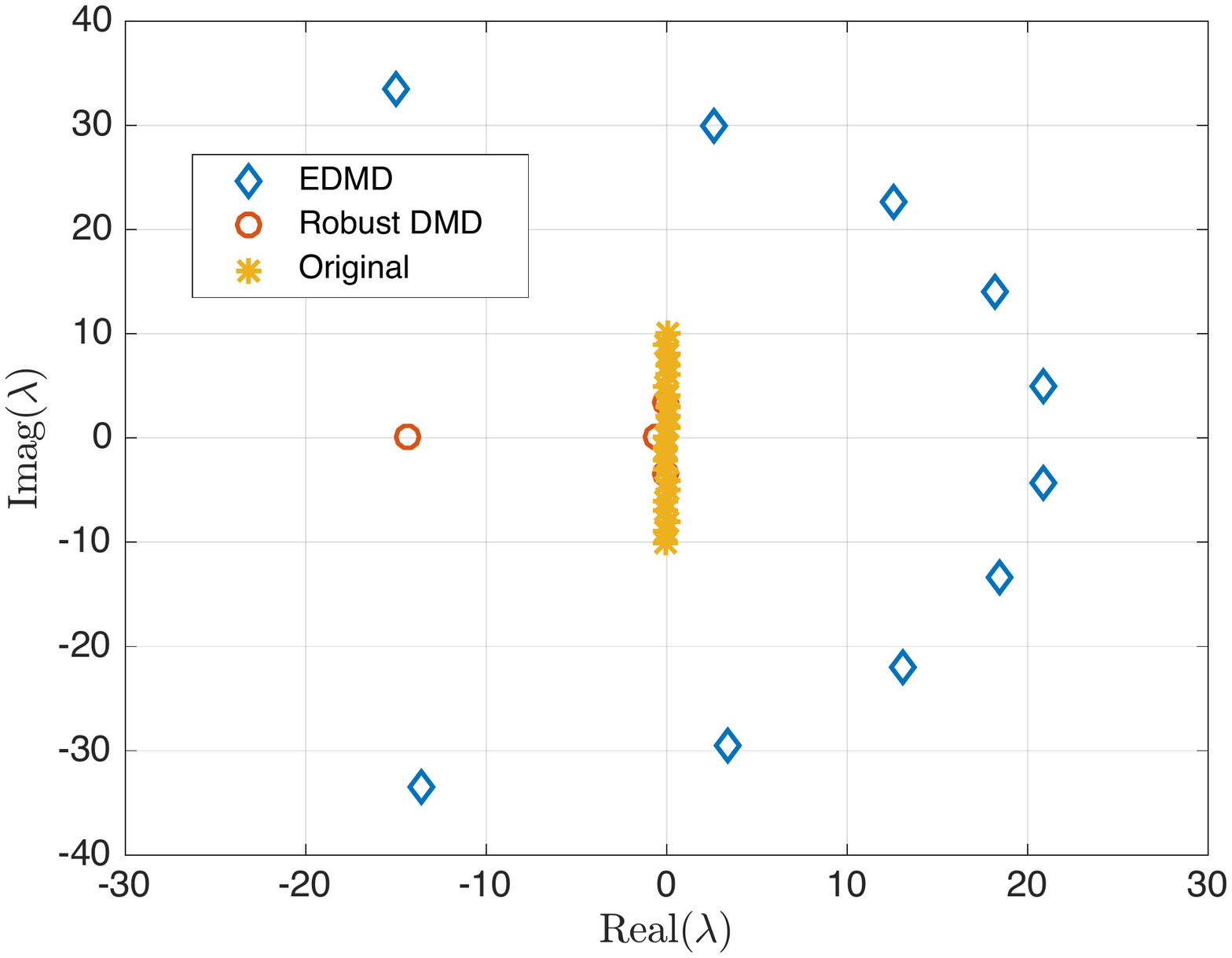}}
\caption{(a) Eigenvalues obtained with EDMD on original data and Robust EDMD on enriched data set. (b) Dominant eigenvalues.}\label{eig_nonlinear}
\end{figure}

The first 30-time steps data were used as the training data for training the Koopman operator. An extra 30 artificial points were added to the obtained data set to form the enriched data set and this enriched data set was used to compute the eigenspectrum of the Koopman operator using Robust EDMD algorithm. The eigenvalues obtained using the dictionary functions given in (\ref{stuart_dic}), with normal EDMD and Robust EDMD with enriched data set is shown in Fig. \ref{eig_nonlinear}a. Fig. \ref{eig_nonlinear}b shows the dominant eigenvalues and it can be observed that Robust EDMD provides a better approximation of the original eigenspectrum. In particular, normal EDMD generates unstable eigenvalues.

\begin{figure}[htp!]
\centering
\includegraphics[scale=.35]{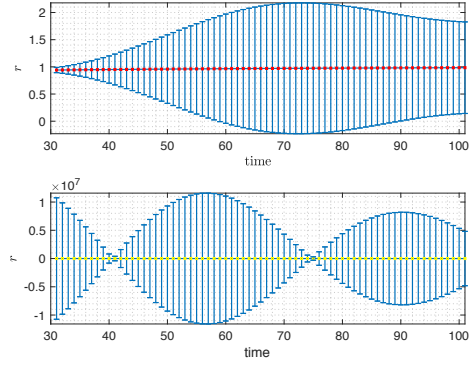}
\caption{Comparison of errors in prediction of $r$ using Robust EDMD and normal EDMD. The top figure shows the prediction error using Robust EDMD and the lower plot shows prediction error using normal EDMD.}\label{prediction_r}
\end{figure}

\begin{figure}[htp!]
\centering
\includegraphics[scale=.35]{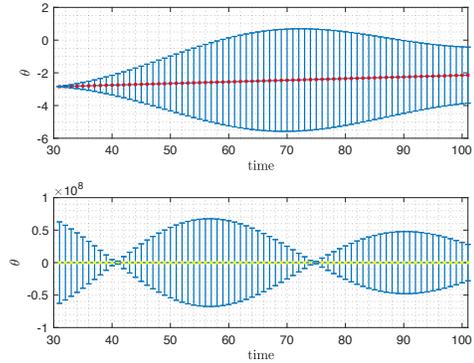}
\caption{Comparison of errors in prediction of $\theta$ using Robust EDMD and normal EDMD. The top figure shows the prediction error using Robust EDMD and the lower plot shows prediction error using normal EDMD.}\label{prediction_theta}
\end{figure}

Further, using the Koopman operators obtained using both normal EDMD and Robust EDMD, future values of both $r$ and $\theta$ was predicted for the next 70 time steps. The errors in the prediction of $r$ and $\theta$ are shown in Fig. \ref{prediction_r} and Fig. \ref{prediction_theta} respectively. In all the error plots, the errors are plotted against the actual values of $r$ and $\theta$ and it can be observed that the errors in prediction for both $r$ and $\theta$ with Robust EDMD are significantly smaller than the prediction errors using normal EDMD.

\subsection{Burger-Equation}
The third example considered in this paper is the Burger equation. Burger equation is a successful but simplified partial differential equation which describes the motion of viscous compressible fluids. The equation is of the form 
\[\partial_t u(x,t)+u\partial_x u=k\partial_x^2u\]
where $u$ is the speed of the gas, $k$ is the kinematic viscosity, $x$ is the spatial coordinate and $t$ is time. 

In the simulation, choosing $k=0.01$, we approximated the PDE solution using the Finite Difference method \cite{KUTLUAY1999251} with the initial condition $u(x,0)=sin(2\pi x)$ and Dirichet boundary condition $u(0,t)=u(1,t)=0$. Given the spatial and temporal ranges, $x\in[0,1],\;t\in[0,1]$, the discretizaion steps are chosen as $\Delta t=0.02$ and $\Delta x=1\times10^{-2}$. With the above set of conditions, the flow $u$ is shown in Fig. \ref{burger_flow}.
\begin{figure}[htp!]
\centering
\includegraphics[scale=.275]{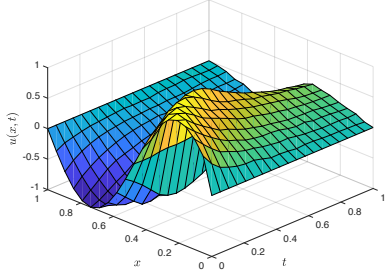}
\caption{Flow field of Burger equation.}\label{burger_flow}
\end{figure}

Since the space discretization was chosen as $\Delta x = 1\times 10^{-2}$, there are 100 state variables. For computing the Koopman operator, 8-time steps data were used. 40 extra data points were added to enrich the data set and the Robust Koopman operator was computed using the enriched data set. Koopman operator using normal DMD was also computed for comparing the errors in prediction. The errors in the prediction of 35 future time steps for $x_{40}$ and $x_{100}$ is shown in Fig. \ref{burger_error}(a) and Fig.. \ref{burger_error}(b) respectively. It can be seen that the error in prediction using Robust Koopman operator from the enriched data set is much smaller as compared to the normal DMD.

\begin{figure}[htp!]
\centering
\subfigure[]{\includegraphics[scale=.205]{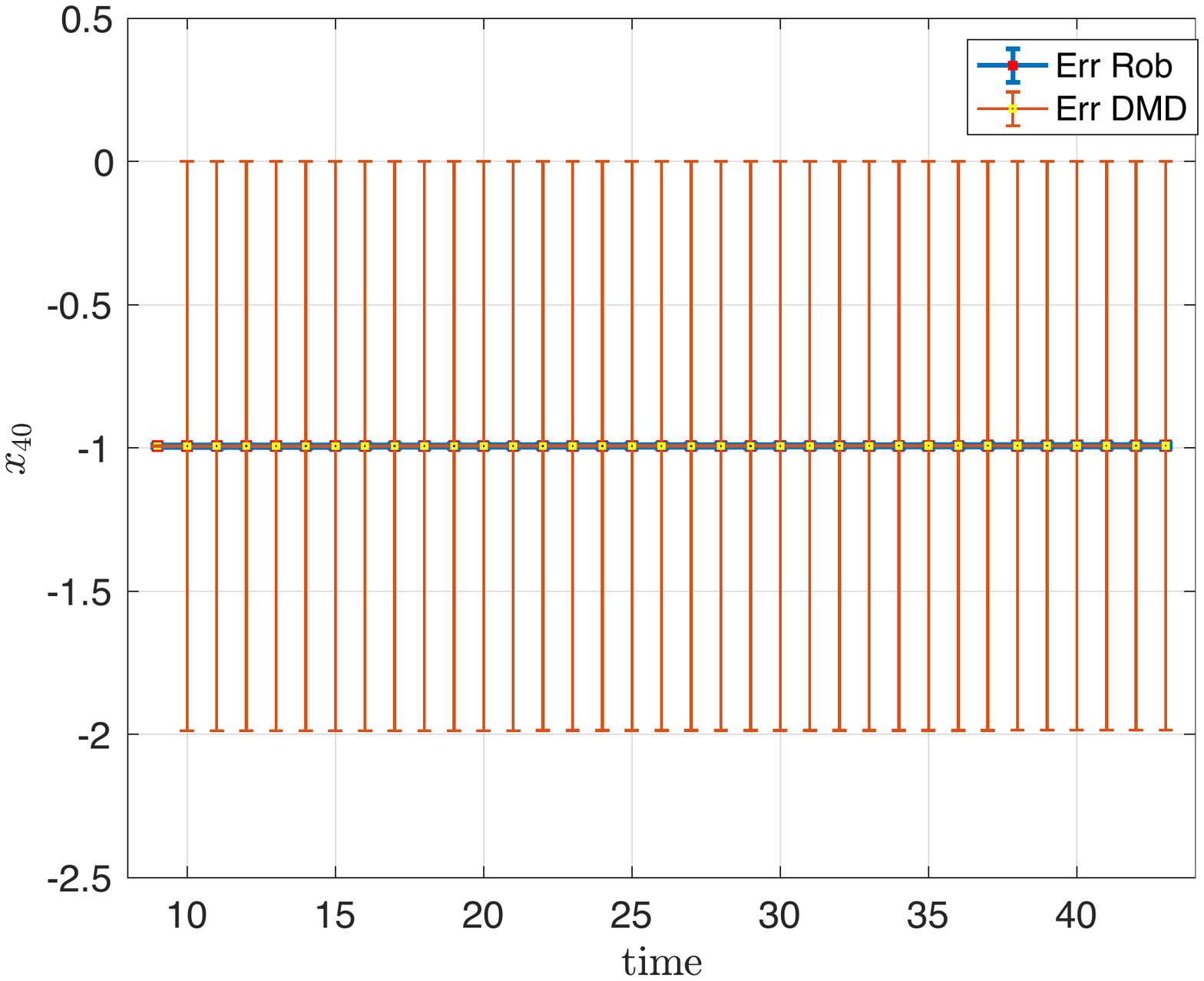}}
\subfigure[]{\includegraphics[scale=.2]{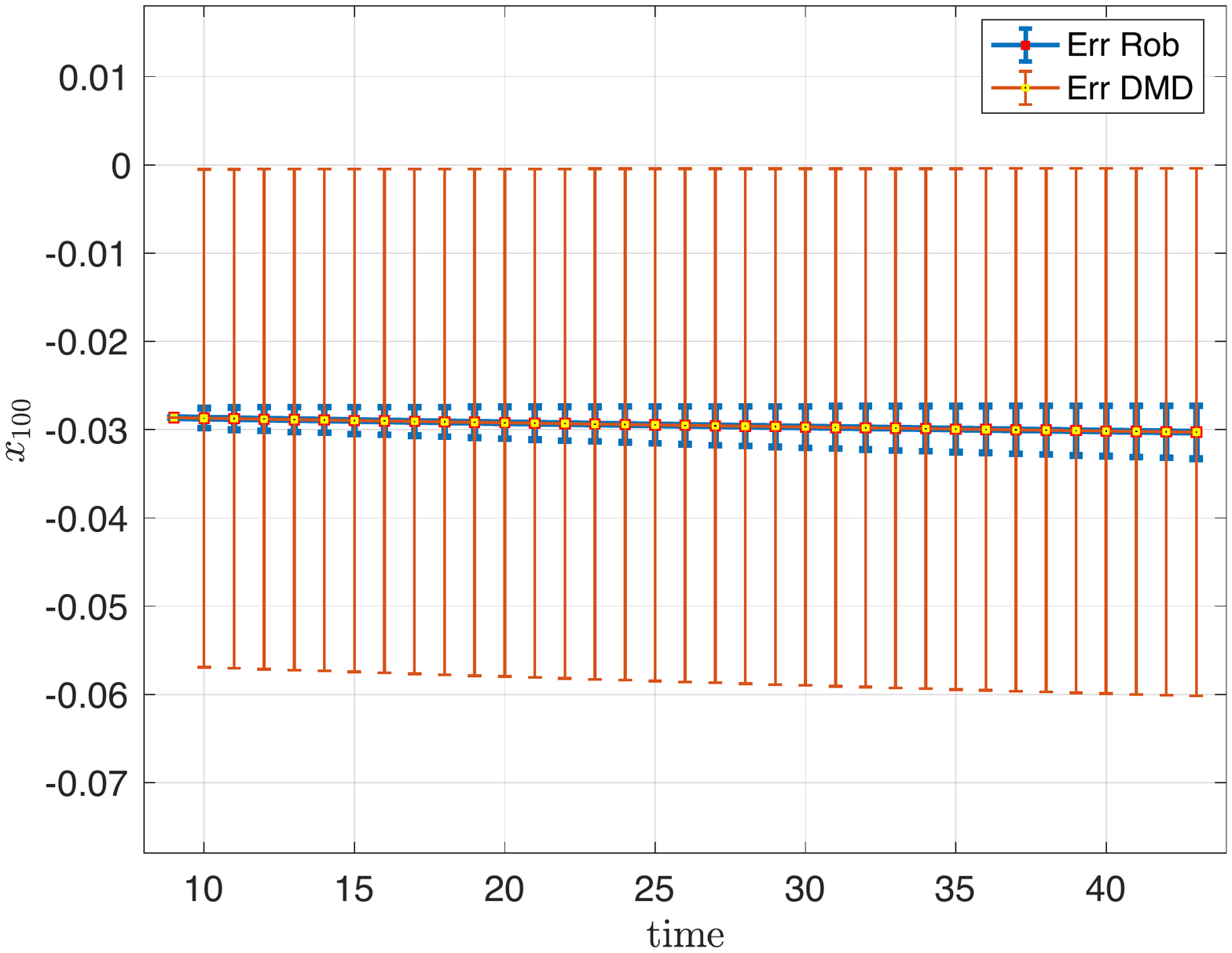}}
\caption{(a) Errors in prediction of $x_{40}$. (b) Errors in prediction of $x_{100}$.}\label{burger_error}
\end{figure}

We further used different training size data for computing the Koopman operator and compared the mean square error in prediction of all the states. In particular, we used both Robust DMD approach and normal DMD to predict 35-time steps from $t=100$, with 7 different training size data, namely 5, 10, 15, 20, 25, 30 and 35-time steps. For each of the training size data, we appended the data set with artificial data points so that there are 40 data points in total. The mean square errors in the prediction of the states are shown in Fig. \ref{burger_mse}. 

\begin{figure}[htp!]
\centering
\subfigure[]{\includegraphics[scale=.45]{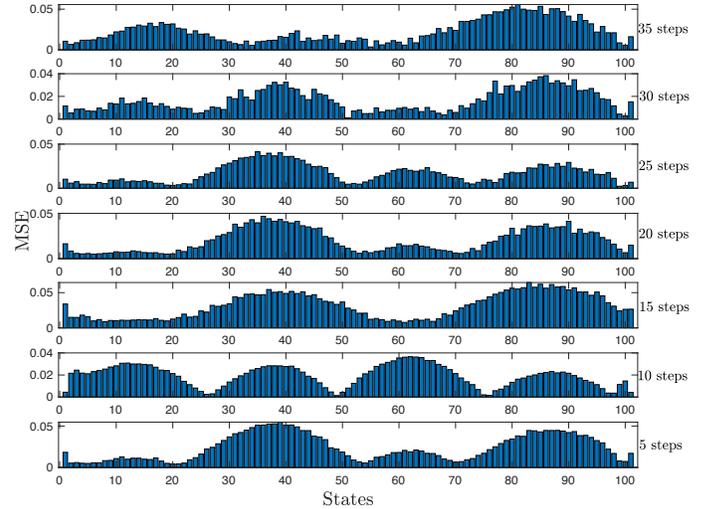}}
\subfigure[]{\includegraphics[scale=.45]{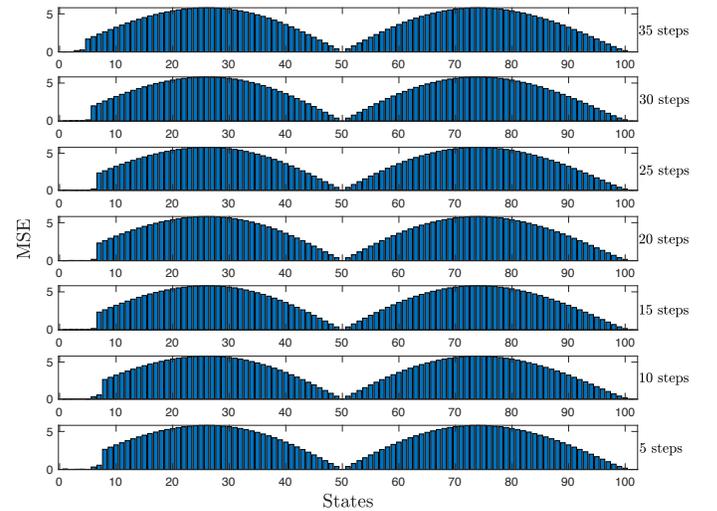}}
\caption{(a) Mean square error of prediction of all the states using Robust DMD aproach. (b) Mean square error of prediction of all the states using normal DMD.}\label{burger_mse}
\end{figure}

Fig. \ref{burger_mse}(a) shows the mean square error in prediction using the proposed approach and Fig. \ref{burger_mse}(b) shows the mean square error using normal DMD. It can be clearly seen that errors using the proposed method are much smaller (of the order of $10^2$). Another observation is that normal DMD is not much sensitive to small variations in training data size, whereas the proposed method is more sensitive to training data size.

\section{Conclusions}\label{section_conclusion}

In this paper, we addressed the problem of computation of Koopman operator from sparse time series data. In certain experimental applications, it may not be possible to obtain time series data which is rich enough to approximate the Koopman operator. We propose an algorithm to compute the Koopman operator for such sparse data. The intuition was based on exploiting the differentiability of the system mapping to append artificial data points to the sparse data set and using robust optimization-based techniques to approximate the Koopman eigenspectrum. The efficiency of the proposed method was also demonstrated on three different dynamical systems and the results obtained were compared to existing Dynamic Mode Decomposition and Extended Dynamic Mode Decomposition algorithms to establish the advantage of our proposed algorithm and in the future; we hope to investigate the performance of our approach on real experimental data sets.

\bibliographystyle{IEEEtran}
\bibliography{subhrajit_robust_DMD}

\end{document}